\newtheorem{theorem}{Theorem}[section]
\newtheorem{lemma}[theorem]{Lemma}
\newtheorem{proposition}[theorem]{Proposition}
\theoremstyle{definition}
\newtheorem{definition}[theorem]{Definition}
\theoremstyle{remark}
\newtheorem{remark}[theorem]{Remark}
\numberwithin{equation}{section}
\newcommand{\ket}[1]{ | #1  \rangle}
\newcommand{\bra}[1]{ \langle #1  |}
\newcommand{\braket}[2]{ \left\langle #1  | #2  \right\rangle}
\newcommand{\ii}{\mathbf{i_1}}
\newcommand{\iii}{\mathbf{i_2}}
\newcommand{\jj}{\mathbf{j}}
\newcommand{\ee}{\mathbf{e_1}}
\newcommand{\eee}{\mathbf{e_2}}
\newcommand{\e}[1]{\mathbf{e_{#1}}}
\newcommand{\ik}[1]{\mathbf{i_{#1}}}
\newcommand{\T}{\mathbb{T}}
\newcommand{\M}{\mathbb{M}}
\newcommand{\C}{\mathbb{C}}
\newcommand{\D}{\mathbb{D}}
\newcommand{\R}{\mathbb{R}}
\newcommand{\nc}{\mathcal{NC}}
\renewcommand{\(}{\left(}
\renewcommand{\)}{\right)}
\renewcommand{\P}[2]{P_{#1}( #2 )}
\newcommand{\bo} {\ensuremath{{\bf i_1}}}
\newcommand{\eo} {\ensuremath{{\bf e_1}}}
\newcommand{\et} {\ensuremath{{\bf e_2}}}
\newcommand{\ek} {\ensuremath{{\bf e_k}}}
\newcommand{\iic}{{\rm \bf{i}}_{\bf{1}}^2}
\newcommand{\iiic}{{\rm \bf{i}}_{\bf{2}}^2}
\newcommand{\hh}{{\widehat{1}}}
\newcommand{\hhh}{{\widehat{2}}}
\newcommand{\mC}{\ensuremath{\mathbb{C}}}
\newcommand{\mN}{\ensuremath{\mathbb{N}}}
\newcommand{\mR}{\ensuremath{\mathbb{R}}}
\newcommand{\mo}{\mathbf{1}}
\newcommand{\mt}{\mathbf{2}}
\newcommand{\mk}{\mathbf{k}}
\newcommand{\scalarmath}[2]{\( #1, #2 \)}
\begin{document}
\vspace{4cm}
\begin{center} \LARGE{\textbf{Bicomplex Riesz-Fischer Theorem}}
\end{center}
\vspace{1cm}

\begin{center} \bf{K. S. Charak$^{1 }$,\quad R. Kumar$^{2}$,\quad D. Rochon$^{3}$ }
\end{center}

\bigskip

\begin{center}
{$^{1}$ Department of Mathematics, University of Jammu,\\
Jammu-180 006, INDIA.\\
E-mail: kscharak7@rediffmail.com }
\end{center}
\medskip

\begin{center}{$^{2}$ Department of Mathematics, University of Jammu,\\
Jammu-180 006, INDIA.\\
E-mail: ravinder.kumarji@gmail.com }
\end{center}

\medskip
\begin{center} {$^{3}$ D\'epartement de math\'ematiques et d'informatique,\\
Universit\'e du Qu\'ebec \`a Trois-Rivi\`eres, C.P. 500, Trois-Rivi\`eres, Qu\'ebec, Canada G9A 5H7.  \\
E-mail: Dominic.Rochon@UQTR.CA,\\
Web: www.3dfractals.com}
\end{center}

\begin{abstract}
\noindent This paper continues the study of infinite dimensional bicomplex
Hilbert spaces introduced in previous articles on the topic. Besides obtaining a Best Approximation
Theorem, the main purpose of this paper is to obtain a bicomplex
analogue of the Riesz-Fischer Theorem. There are many statements of the Riesz-Fischer (R-F) Theorem in the literature, some are equivalent, some are consequences of the original versions. The one referred to in this paper is the R-F Theorem which establishes that the spaces $l^2$ is the canonical model space.
\end{abstract}

\noindent \textbf{Keywords: }Bicomplex numbers, Bicomplex algebra, Generalized Hilbert spaces, Riesz-Fischer Theorem.\\
\noindent \textbf{AMS [2010]: }Primary 16D10; Secondary 30G35, 46C05, 46C50.

\newpage

\section{Introduction}
Hilbert spaces over the field of complex numbers are indispensable
for mathematical structure of quantum mechanics \cite{JVN} which in
turn play a great role in molecular, atomic and subatomic
phenomena. The work towards the generalization of quantum
mechanics to bicomplex number system have been recently a topic in
different quantum mechanical models \cite{CV, CVM, BK, Rochon2, Rochon3}.
More specifically, in \cite{GMR2, GMR} the authors made an in depth study of bicomplex
Hilbert spaces and operators acting on them. After obtaining reasonable
results responsible for investigations on finite and infinite
dimensional bicomplex Hilbert spaces and applications to quantum
mechanics \cite{GMR3, GMR4, GeRo, MMR}, they in \cite{GMR} asked for extension of Riesz-Fischer
Theorem and Spectral Theorem on infinite dimensional Hilbert spaces. Recently,
the bicomplex analogue of the Spectral Decomposition Theorem was proven using bicomplex eigenvalues \cite{RKC2}. 
In this paper, we obtain a bicomplex analogue of the Riesz-Fischer
Theorem \cite{Hansen, Horvath} on infinite dimensional Hilbert spaces. Our proof of R-F Theorem
is essentially different from its complex Hilbert space analogue in the sense that we do not make use of the so called Parseval's
identity as done in general Hilbert spaces over $\mR$ or $\mC$. To support our results, we prove A Best Approximation Theorem and we show that the bicomplex analogue of $l^2$, the space of all (real, complex or bicomplex) sequences $\{ w_l \}$ such that
$\sum_{l=1}^{\infty}|w_l|^{2}<\infty$, is a bicomplex Hilbert space. As for the standard quantum mechanics, this specific result is fundamental to understand the space where live the wave functions of the bicomplex Quantum
Harmonic Oscillator \cite{GMR3, Rochon2, Rochon3}.

\section{Preliminaries}

This section first summarizes a number of known results
on the algebra of bicomplex numbers, which will be needed
in this paper.  Much more details as well as proofs can
be found in~\cite{Price, Rochon1, Rochon2, Rochon3}.
Basic definitions related to bicomplex modules and scalar
products are also formulated as in~\cite{GMR2, Rochon3}, but
here we make no restrictions to finite dimensions following definitions of \cite{GMR}.


\subsection{Bicomplex Numbers}\label{Bicomplex Numbers}

\subsubsection{Definition}\label{Definition of bicomplex numbers}
The set $\M(2)$ of \emph{bicomplex numbers} is defined as
\begin{align}
\M(2):=\{ w=z_1+z_2\mathbf{i_2}~|~z_1,z_2\in\mathbb{C}(\mathbf{i_1}) \},
\label{2.1}
\end{align}
where $\ii$ and $\iii$ are independent imaginary units such that
$\iic=-1=\iiic$.  The product of $\ii$ and $\iii$ defines
a hyperbolic unit $\jj$ such that $\mathbf{j}^2=1$.
The product of all units is commutative and satisfies
\begin{equation*}
\ii\iii=\jj, \qquad \ii\jj=-\iii,
\qquad \iii\jj=-\ii. \label{2.2}
\end{equation*}
With the addition and multiplication of two
bicomplex numbers defined in the obvious way,
the set $\M(2)$ makes up a commutative ring.
They are a particular case of the so-called \textit{Multicomplex Numbers} (denoted $\M(n)$) \cite{Price, GR} and \cite{Vaijac}. In fact, bicomplex numbers $$\M(2)\cong {\rm Cl}_{\Bbb{C}}(1,0) \cong {\rm Cl}_{\Bbb{C}}(0,1)$$
are unique among the complex Clifford algebras (see \cite{BDS,DSS} and \cite{Ryan})
in the sense that this set form a commutative, but not division algebra.

Three important subsets of $\M(2)$ can be
specified as
\begin{align*}
\mathbb{C}(\ik{k}) &:= \{ x+y\ik{k}~|~x,y\in\mathbb{R} \},
\qquad k=1,2 ;\label{2.3}\\
\mathbb{D} &:= \{ x+y\jj~|~x,y\in\mathbb{R} \} .
\end{align*}
Each of the sets $\mathbb{C}(\ik{k})$ is isomorphic
to the field of complex numbers, while $\mathbb{D}$ is
the set of so-called \emph{hyperbolic numbers}, also called duplex numbers (see, e.g. \cite {Sob}, \cite {Rochon1}).

\subsubsection{Conjugation and Moduli}\label{Bicomplex conjugation}

Three kinds of conjugation can be defined on
bicomplex numbers. With $w$ specified as in~\eqref{2.1}
and the bar ($\,\bar{\mbox{}}\,$) denoting complex
conjugation in $\mathbb{C}(\mathbf{i_1})$,
we define
\begin{equation*}
w^{\dag_1}:=\bar{z}_1+\bar{z}_2\mathbf{i_2},\label{2.5}
\qquad w^{\dag_2}:=z_1-z_2\mathbf{i_2},
\qquad w^{\dag_3}:=\bar{z}_1-\bar{z}_2\mathbf{i_2} .
\end{equation*}
It is easy to check that each conjugation has the following
properties:
\begin{equation*}
(s+t)^{\dag_k}=s^{\dag_k}+t^{\dag_k},
\qquad \left(s^{\dag_k} \right)^{\dag_k}=s,
\qquad (s\cdot t)^{\dag_k}=s^{\dag_k}\cdot t^{\dag_k} .
\label{2.6}
\end{equation*}
Here $s,t\in\M(2)$ and $k=1,2,3$.

With each kind of conjugation, one can define a specific
bicomplex modulus as
\begin{align*}
|w|_\ii^2&:=w\cdot w^{\dag_2}=z_1^2+z_2^2~\in\C(\ii),\label{2.7a}\\
|w|_\iii^2&:=w\cdot w^{\dag_1}=\left(|z_1|^2-|z_2|^2\right)
+ 2 \, \textrm{Re}(z_1\bar{z}_2)\iii~\in\C(\iii),\\
|w|_\jj^2&:=w\cdot w^{\dag_3}=\left(|z_1|^2+|z_2|^2\right)
- 2 \, \textrm{Im}(z_1\bar{z}_2)\jj~\in\D.
\end{align*}
It can be shown that $|s\cdot t|_k^2=|s|_k^2\cdot|t|_k^2$,
where $k=\ii,\iii$ or $\jj$.

In this paper we will often use the Euclidean $\R^4$-norm
defined as
\begin{equation*}
|w|:=\sqrt{|z_1|^2+|z_2|^2}=\sqrt{\textrm{Re}(|w|_\jj^2)} \; .
\label{2.8}
\end{equation*}
Clearly, this norm maps $\M(2)$ into $\R$.  We have $|w|\geq0$,
and $|w|=0$ if and only if $w=0$. Moreover~\cite{Rochon1},
for all $s,t\in\M(2)$,
\begin{equation*}
|s+t|\leq|s|+|t|, \qquad |s\cdot t|\leq \sqrt{2} \, |s|\cdot|t|.
\label{2.9}
\end{equation*}

\subsubsection{Idempotent Basis}\label{Idempotant basis}

The operations of the bicomplex algebra is considerably simplified by
the introduction of two bicomplex numbers $\ee$
and $\eee$ defined as
\begin{equation*}
\ee:=\frac{1+\jj}{2},\qquad\eee:=\frac{1-\jj}{2}.\label{2.10}
\end{equation*}
In fact $\ee$ and $\eee$ are hyperbolic numbers.
They make up the so-called \emph{idempotent basis}
of the bicomplex numbers. One easily checks that ($k=1,2$)
\begin{equation}
\mathbf{e}_{\mathbf{1}}^2=\ee,
\quad \mathbf{e}_{\mathbf{2}}^2=\eee,
\quad \ee+\eee=1,
\quad \mathbf{e}_{\mathbf{k}}^{\dag_3}=\e{k} ,
\quad \ee\eee=0 . \label{2.11}
\end{equation}

Any bicomplex number $w$ can be written uniquely as
\begin{equation}
w = z_1+z_2\iii = z_\hh \ee + z_\hhh \eee , \label{2.12}
\end{equation}
where
\begin{equation*}
z_\hh= z_1-z_2\ii \quad \mbox{and}
\quad z_\hhh= z_1+z_2\ii \label{2.12a}
\end{equation*}
both belong to $\mathbb{C}(\ii)$.  Note that
\begin{equation*}
|w| = \frac{1}{\sqrt{2}}
\sqrt{|z_\hh |^2 + |z_\hhh |^2} \, . \label{norm7}
\end{equation*}
The caret notation
($\hh$ and $\hhh$) will be used systematically in
connection with idempotent decompositions, with the
purpose of easily distinguishing different types
of indices.  As a consequence of~\eqref{2.11}
and~\eqref{2.12}, one can check that if
$\sqrt[n]{z_\hh}$ is an $n$th root of $z_\hh$
and $\sqrt[n]{z_\hhh}$ is an $n$th root of $z_\hhh$,
then $\sqrt[n]{z_\hh} \, \ee + \sqrt[n]{z_\hhh} \, \eee$
is an $n$th root of $w$.

The uniqueness of the idempotent decomposition
allows the introduction of two projection operators as
\begin{align*}
P_1: w \in\M(2)&\mapsto z_\hh \in\C(\ii),\label{2.14}\\
P_2: w \in\M(2)&\mapsto z_\hhh \in\C(\ii).
\end{align*}
The $P_k$ ($k = 1, 2$) satisfies
\begin{equation*}
[P_k]^2=P_k, \qquad P_1\ee+P_2\eee=\mathbf{Id}, \label{2.16}
\end{equation*}
and, for $s,t\in\M(2)$,
\begin{equation*}
P_k(s+t)=P_k(s)+P_k(t),
\qquad P_k(s\cdot t)=P_k(s)\cdot P_k(t) .\label{2.17}
\end{equation*}

The product of two bicomplex numbers $w$ and $w'$
can be written in the idempotent basis as
\begin{align*}
w \cdot w' = (z_\hh \ee + z_\hhh \eee)
\cdot (z'_\hh \ee + z'_\hhh \eee)
= z_\hh z'_\hh \ee + z_\hhh z'_\hhh \eee .\label{2.20}
\end{align*}
Since 1 is uniquely decomposed as $\ee + \eee$,
we can see that $w \cdot w' = 1$ if and only if
$z_\hh z'_\hh = 1 = z_\hhh z'_\hhh$.  Thus $w$ has an inverse
if and only if $z_\hh \neq 0 \neq z_\hhh$, and the
inverse $w^{-1}$ is then equal to
$(z_\hh)^{-1} \ee + (z_\hhh)^{-1} \eee$.  A nonzero $w$ that
does not have an inverse has the property that
either $z_\hh = 0$ or $z_\hhh = 0$, and such a $w$ is
a divisor of zero.  Zero divisors make up the
so-called \emph{null cone} $\nc$.  That terminology comes
from the fact that when $w$ is written as in~\eqref{2.1},
zero divisors are such that $z_1^2 + z_2^2 = 0$.

Any hyperbolic number can be written in the
idempotent basis as $x_\hh \ee + x_\hhh \eee$, with
$x_\hh$ and $x_\hhh$ in~$\R$.  We define the set~$\D_+$
of positive hyperbolic numbers as
\begin{equation*}
\D_+:= \{ x_\hh \ee + x_\hhh \eee ~|~ x_\hh, x_\hhh \geq 0 \}.
\label{2.21}
\end{equation*}
Since $w^{\dag_3} = \bar{z}_\hh \ee + \bar{z}_\hhh \eee$,
it is clear that $w \cdot w^{\dag_3} \in \D_+$ for any
$w$ in $\M(2)$.

\subsection{$\M(2)$-Module and Scalar Product}\label{Module}

The set of bicomplex numbers is a commutative ring.
Just like vector spaces are defined over fields,
modules are defined over rings. A module~$M$ defined
over the ring of bicomplex numbers is called an
$\M(2)$-\emph{module}~\cite{Rochon3, GMR2, GMR}.

Let $M$ be an $\M(2)$-module. For $k=1, 2$, we define $V_k$
as the set of all elements of the form $\e{k} \ket{\psi}$,
with $\ket{\psi} \in M$.  Succinctly, $V_1:=\eo M$
and $V_2:=\et M$. In fact, $V_k$ is a vector space over $\C(\ii)$ and any element $\ket{v_k} \in V_k$ satisfies
$\ket{v_k} = \e{k} \ket{v_k}$ for $k=1,2$.
For arbitrary $\M(2)$-modules, vector spaces $V_1$ and $V_2$
bear no structural similarities.  For more specific modules,
however, they may share structure.  It was shown in~\cite{GMR2}
that if~$M$ is a finite-dimensional free $\M(2)$-module, then
$V_1$ and~$V_2$ have the same dimension.

For any $\ket{\psi}\in M$, there exist a unique decomposition
\begin{align}
\ket{\psi} = \ket{v_1}
+ \ket{v_2}, \label{2.31}
\end{align}
where $v_k\in V_k$, $k=1,2$.

It will be useful to rewrite \eqref{2.31} as
\begin{align*}
\ket{\psi} = \ket{\psi_\mo}
+ \ket{\psi_\mt} , 
\end{align*}
where
\begin{align*}
\ket{\psi_\mo} := \eo\ket{\psi}  && \text{and} && \ket{\psi_\mt} := \et\ket{\psi} .
\end{align*}

In fact, the $\M(2)$-module $M$ can be viewed as a vector space $M'$
over $\mC(\bo)$, and $M'=V_1\oplus V_2.$ From a set-theoretical point of view, $M$ and $M'$ are
identical.  In this sense we can say, perhaps improperly,
that the \textbf{module} $M$ can be decomposed into the
direct sum of two vector spaces over $\mC(\bo)$, i.e.\
$M=V_1\oplus V_2.$

\subsubsection{Bicomplex Scalar Product}\label{bicomplex sc}

A \emph{bicomplex scalar product} maps two arbitrary kets
$\ket{\psi}$ and $\ket{\phi}$ into a bicomplex number
$(\ket{\psi}, \ket{\phi})$, so that the following
always holds ($s \in \M(2)$):
\begin{enumerate}
\item $(\ket{\psi}, \ket{\phi} + \ket{\chi})
=(\ket{\psi}, \ket{\phi}) + (\ket{\psi}, \ket{\chi})$;
\item $(\ket{\psi}, s \ket{\phi})
= s (\ket{\psi},\ket{\phi})$;
\item $(\ket{\psi}, \ket{\phi})
= (\ket{\phi}, \ket{\psi})^{\dagger_3}$;
\item $(\ket{\psi}, \ket{\psi})
=0~\Leftrightarrow~\ket{\psi}=0$.
\end{enumerate}
The bicomplex scalar product was defined in~\cite{Rochon3}
where, as in this paper, the physicists' convention is used
for the order of elements in the product.

Property $3$ implies that $(\ket{\psi}, \ket{\psi})\in\D$,
while properties 2 and 3 together imply that
$(s \ket{\psi}, \ket{\phi}) = s^{\dagger_3}
(\ket{\psi},\ket{\phi})$. However, in this work we will also require the
bicomplex scalar product $\(\cdot,\cdot\)$ to be \textit{hyperbolic
positive}, i.e.
\begin{align*}
(\ket{\psi},\ket{\psi})\in\mathbb{D}_{+},\mbox{
}\forall\ket{\psi}\in M. 
\end{align*}
This is a necessary condition if we want to recover the standard quantum mechanics from the bicomplex one (see \cite{GMR3}).
\begin{definition}
Let $M$ be a $\mathbb{T}$-module and let $(\cdot,\cdot)$
be a bicomplex scalar product defined on $M$. The space
$\{M, (\cdot,\cdot)\}$ is called a $\M(2)$-inner product
space, or bicomplex pre-Hilbert space.  When no confusion
arises, $\{M, (\cdot,\cdot)\}$ will simply be denoted by~$M$.
\end{definition}

In this work, we will sometimes use the Dirac notation
\begin{align*}
(\ket{\psi},\ket{\phi})=\braket{\psi}{\phi} 
\end{align*}
for the scalar product. The one-to-one correspondence between \emph{bra} $\bra{\cdot}$ and
\emph{ket} $\ket{\cdot}$ can be established from the Bicomplex Riesz Representation Theorem \cite[Th. 3.7]{GMR}.
As in \cite{GeRo}, subindices will be used inside the ket notation. In fact, this is simply a convenient way to deal with the Dirac notation in $V_1$ and $V_2$.
Note that the following projection of a bicomplex scalar product:
\begin{equation*}
(\cdot,\cdot)_{\widehat{k}}:=P_k((\cdot,\cdot)):M\times M\longrightarrow \mC(\bo)
\end{equation*}
is a \textbf{standard scalar product} on $V_k$, for $k=1,2$. One easily show \cite{GMR} that
\begin{align}
(\ket{\psi}, \ket{\phi})
&= \ee\P{1}{(\ket{\psi_\mo}, \ket{\phi_\mo})}
+ \eee\P{2}{(\ket{\psi_\mt}, \ket{\phi_\mt})} \notag\\
&=\ee\scalarmath{\ket{\psi_\mo}}{\ket{\phi_\mo}}_\hh+\eee\scalarmath{\ket{\psi_\mt}}{\ket{\phi_\mt}}_\hhh. \notag\\
&=\ee\braket{\psi_\mo}{\phi_\mo}_\hh+\eee\braket{\psi_\mt}{\phi_\mt}_\hhh\label{2.36}.
\end{align}

We point out that a bicomplex scalar product is
\textbf{completely characterized} by the two standard
scalar products $\scalarmath{\cdot}{\cdot}_{\widehat{k}}$ on $V_k$.
In fact, if $\scalarmath{\cdot}{\cdot}_{\widehat{k}}$
is an arbitrary scalar product on $V_k$, for $k=1,2$,
then $\scalarmath{\cdot}{\cdot}$ defined as in \eqref{2.36}
is a bicomplex scalar product on $M$.

From this scalar product, we can define a \textbf{norm}
on the vector space $M'$:
\begin{align}
\big{|}\big{|}\ket{\phi}\big{|}\big{|}
&:= \frac{1}{\sqrt{2}}
\sqrt{\scalarmath{\ket{\phi_\mo}} {\ket{\phi_\mo}}_{\widehat{1}}
+ \scalarmath{\ket{\phi_\mt}}{\ket{\phi_\mt}}_{\widehat{2}}} \notag\\
&=\frac{1}{\sqrt{2}} \sqrt{ \big{|}\ket{\phi_\mo}\big{|}^{2}_{1}
+ \big{|}\ket{\phi_\mt}\big{|}^{2}_{2}} \, .
\label{T-norm}
\end{align}
Here we wrote
\begin{equation*}
\big{|}\ket{\phi_\mk} \big{|}_{k}
= \sqrt{\scalarmath{\ket{\phi_\mk}}
{\ket{\phi_\mk}}_{\widehat{k}}} \, ,
\end{equation*}
where $|\cdot|_k$ is the natural scalar-product-induced norm on~$V_k$.
Moreover,
\begin{equation}
\big{|}\big{|}\ket{\phi}\big{|}\big{|}
= \frac{1}{\sqrt{2}}
\sqrt{\scalarmath{\ket{\phi_\mo}} {\ket{\phi_\mo}}_{\widehat{1}}
+ \scalarmath{\ket{\phi_\mt}}{\ket{\phi_\mt}}_{\widehat{2}}} \notag
= \big{|} \sqrt{\scalarmath{\ket{\phi}}{\ket{\phi}}} \big{|}.
\label{T-norma}
\end{equation}

\begin{definition}
Let $M$ be an $\M(2)$-module and let $M'$ be the associated
vector space. We say that $\|\cdot\|:M\longrightarrow \mathbb{R}$
is a \textbf{$\M(2)$-norm} on $M$ if the following holds:

\smallskip\noindent
1. $\|\cdot\|:M'\longrightarrow \mathbb{R}$ is a norm;\\
2. $\big{\|}w\cdot \ket{\psi}\big{\|}\leq \sqrt{2}
\big{|}w\big{|}\cdot\big{\|}\ket{\psi}\big{\|}$,
$\forall w\in\T$, $\forall \ket{\psi}\in M$.
\label{norm}
\end{definition}
\noindent A $\M(2)$-module with a \textbf{$\M(2)$-norm} is called a
\textbf{normed $\M(2)$-module}. It is easy to check that $\|\cdot\|$ in \eqref{T-norm} is a \textbf{$\M(2)$-norm} on $M$
and that the $\M(2)$-module $M$ is \textbf{complete} with respect
to the following metric on $M$:
\begin{equation*}
d(\ket{\phi},\ket{\psi})=\big{|}\big{|}\ket{\phi}-\ket{\psi}\big{|}\big{|}
\end{equation*}
if and only if $V_1$ and $V_2$ are complete (see \cite{GMR}).
\begin{definition}
A bicomplex Hilbert space is a $\M(2)$-inner product space $M$ which is complete with respect to the induced $\M(2)$-norm \eqref{T-norm}.
\label{Hilbert}
\end{definition}

\section{Main results}

Throughout the text, by a \textbf{bicomplex Hilbert space} we
shall mean an infinite dimensional bicomplex Hilbert space.
A normed $\M(2)$-module with a Schauder $\M(2)$-basis is called a
\textbf{countable $\M(2)$-module}.

\begin{definition}
A bicomplex Hilbert space $M$ is said to be \textit{separable by a basis} if it has
a Schauder $\M(2)$-basis.
\end{definition}

We note that by Theorem 3.10 in \cite{GMR}, any Schauder $\M(2)$-basis of $M$ can be
orthonormalized.

\begin{remark}
A topological space $S$ is called \textit{separable} if it admits a countable dense subset $W$.
\end{remark}

\begin{proposition}
Let $\braket{\cdot}{\cdot}$ be a bicomplex inner product in the bicomplex Hilbert space $M$ and let $||\cdot||$ be the induced norm.
If the sequences $\{\ket{\psi_n}\}$ and $\{\ket{\phi_n}\}$ in $M$ converge to  $\{\ket{\psi}\}$ and
$\{\ket{\phi}\}$ respectively, then the sequence of inner products $\{ \braket{{\psi_n}}{{\phi_n}}\}$
converges to $\braket{{\psi}}{{\phi}}$.
\label{SCALIM}
\end{proposition}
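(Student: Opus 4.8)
The plan is to reduce the statement to the classical joint continuity of the scalar product on each of the two complex Hilbert spaces $V_1$ and $V_2$, and then to reassemble the limit through the idempotent decomposition \eqref{2.36}. First I would observe that, by the norm formula \eqref{T-norm}, convergence $\ket{\psi_n}\to\ket{\psi}$ in the induced $\M(2)$-norm is \emph{equivalent} to the simultaneous convergences $\ket{(\psi_n)_\mo}\to\ket{\psi_\mo}$ in $V_1$ and $\ket{(\psi_n)_\mt}\to\ket{\psi_\mt}$ in $V_2$, since
$$\big\|\ket{\psi_n}-\ket{\psi}\big\|^{2}=\frac{1}{2}\Big(\big|\ket{(\psi_n)_\mo}-\ket{\psi_\mo}\big|_{1}^{2}+\big|\ket{(\psi_n)_\mt}-\ket{\psi_\mt}\big|_{2}^{2}\Big),$$
where I have used the linearity of the projections $P_k$ to split the difference into its two components. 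The same reasoning applies to $\{\ket{\phi_n}\}$, so the hypotheses transfer verbatim to each component space.

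Next, on each vector space $V_k$ the projected product $\scalarmath{\cdot}{\cdot}_{\widehat{k}}$ is a genuine standard scalar product over $\mathbb{C}(\ii)$, so I would invoke the one-variable classical theory. Setting $a_n:=\braket{(\psi_n)_\mo}{(\phi_n)_\mo}_\hh$ and splitting
$$a_n-\braket{\psi_\mo}{\phi_\mo}_\hh=\braket{(\psi_n-\psi)_\mo}{(\phi_n)_\mo}_\hh+\braket{\psi_\mo}{(\phi_n-\phi)_\mo}_\hh,$$
the Cauchy--Schwarz inequality on $V_1$ bounds the right-hand side by $\big|\ket{(\psi_n)_\mo}-\ket{\psi_\mo}\big|_{1}\,\big|\ket{(\phi_n)_\mo}\big|_{1}+\big|\ket{\psi_\mo}\big|_{1}\,\big|\ket{(\phi_n)_\mo}-\ket{\phi_\mo}\big|_{1}$. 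Since a convergent sequence is norm-bounded, $\{\big|\ket{(\phi_n)_\mo}\big|_{1}\}$ is bounded, and the first reduction step forces both difference-norms to zero; hence $a_n\to\braket{\psi_\mo}{\phi_\mo}_\hh$ in $\mathbb{C}(\ii)$. The identical argument on $V_2$ gives $\braket{(\psi_n)_\mt}{(\phi_n)_\mt}_\hhh\to\braket{\psi_\mt}{\phi_\mt}_\hhh$.

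Finally I would reassemble through \eqref{2.36}, writing
$$\braket{\psi_n}{\phi_n}=\ee\braket{(\psi_n)_\mo}{(\phi_n)_\mo}_\hh+\eee\braket{(\psi_n)_\mt}{(\phi_n)_\mt}_\hhh,$$
and using that a sequence in $\M(2)$ converges in the Euclidean norm if and only if both of its idempotent coordinates converge in $\mathbb{C}(\ii)$, which is immediate from $|w|=\frac{1}{\sqrt{2}}\sqrt{|z_\hh|^2+|z_\hhh|^2}$ attached to \eqref{2.12}. Combining the two componentwise limits then yields $\braket{\psi_n}{\phi_n}\to\braket{\psi}{\phi}$. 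I expect the only point genuinely requiring care to be the first reduction step, namely making precise that the $\M(2)$-norm both controls and is controlled by the pair of component norms; once that equivalence is in place, the result follows from applying the classical scalar-product continuity twice and the linearity of the projections $P_k$.
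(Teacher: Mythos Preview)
Your argument is correct, but it proceeds along a different line from the paper's. You decompose both the norm and the inner product through the idempotent basis via \eqref{T-norm} and \eqref{2.36}, apply the classical Cauchy--Schwarz inequality and joint continuity of the scalar product separately on each complex Hilbert space $V_k$, and then reassemble the two componentwise limits using $|w|=\frac{1}{\sqrt{2}}\sqrt{|z_\hh|^2+|z_\hhh|^2}$. The paper instead works directly at the bicomplex level: it writes
\[
\braket{\psi_n}{\phi_n}-\braket{\psi}{\phi}=\braket{\psi_n-\psi}{\phi_n-\phi}+\braket{\psi_n-\psi}{\phi}+\braket{\psi}{\phi_n-\phi}
\]
and applies the \emph{bicomplex} Schwarz inequality (Theorem~3.8 of \cite{GMR}) to bound each term by $\sqrt{2}$ times a product of $\M(2)$-norms, finishing in one stroke. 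Your route has the virtue of relying only on the classical theory and making explicit the equivalence between $\M(2)$-norm convergence and simultaneous component convergence; it is also the same decomposition strategy the paper itself uses later for Theorem~\ref{BEST}. The paper's route is shorter and showcases the bicomplex Schwarz inequality as a ready-made tool, avoiding the detour through $V_1$ and $V_2$.
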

\begin{proof}
First observe that:
$\braket{{\psi_n}}{{\phi_n}}-\braket{{\psi}}{{\phi}}$
\begin{eqnarray*}
&=& \braket{{\psi_n}}{{\phi_n}}-\braket{{\psi}}{{\phi_n}}+\braket{{\psi}}{{\phi_n}}-\braket{{\psi}}{{\phi}}\\
&=& \braket{\psi_n - \psi}{\phi_n} + \braket{\psi}{\phi_n - \phi}\\
&=& \braket{{\psi_n}-{\psi}}{{\phi_n}-{\phi}}
+\braket{{\psi_n}-{\psi}}{{\phi}}
+ \braket{{\psi}}{{\phi_n}-{\phi}}.
\end{eqnarray*}
From this we get by the \textbf{bicomplex Schwarz inequality} (\cite{GMR}, Theorem 3.8):
$\big{|}\braket{{\psi_n}}{{\phi_n}}-\braket{{\psi}}{{\phi}}\big{|}$
\begin{eqnarray*}
&=& \big{|}\braket{{\psi_n}-{\psi}}{{\phi_n}-{\phi}}
+\braket{{\psi_n}-{\psi}}{{\phi}}
+ \braket{{\psi}}{{\phi_n}-{\phi}}\big{|}\\
&\leq& \big{|}\braket{{\psi_n}-{\psi}}{{\phi_n}-{\phi}}\big{|}
+\big{|}\braket{{\psi_n}-{\psi}}{{\phi}}\big{|}
+\big{|}\braket{{\psi}}{{\phi_n}-{\phi}}\big{|}\\
&\leq& \big{[}\sqrt{2}\big{|}\big{|}\ket{\psi_n}-\ket{\psi}\big{|}\big{|}\cdot\big{|}\big{|}\ket{\phi_n}-\ket{\phi}\big{|}\big{|}
+\sqrt{2}\big{|}\big{|}\ket{\psi_n}-\ket{\psi}\big{|}\big{|}\cdot\big{|}\big{|}\ket{\phi}\big{|}\big{|}\\
& &+\sqrt{2}\big{|}\big{|}\ket{\psi}\big{|}\big{|}\cdot\big{|}\big{|}\ket{\phi_n}-\ket{\phi}\big{|}\big{|}\big{]}.
\end{eqnarray*}
The proposition now follows easily.
\end{proof}

\begin{theorem}[Best Approximation Theorem]
Let  $\{ \ket{\psi_n}\}$ be an arbitrary orthonormal sequence in the bicomplex Hilbert space $M=H_1\oplus H_2$, and let $\alpha_1,\ldots,\alpha_n$ be a set of bicomplex numbers. Then for all $\ket{\psi}\in M$,
$$\big{|}\big{|}\ket{\psi}-\sum_{l=0}^{n}\alpha_l \ket{\psi_l}\big{|}\big{|}\geq \big{|}\big{|}\ket{\psi}-\sum_{l=0}^{n}\braket{{\psi_l}}{{\psi}} \ket{\psi_l}\big{|}\big{|}.$$
\label{BEST}
\end{theorem}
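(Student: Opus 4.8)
The plan is to reduce the bicomplex statement to the two standard complex Hilbert spaces $V_1,V_2$ via the idempotent decomposition, and there invoke the classical best-approximation fact that the orthogonal projection onto a finite-dimensional subspace is the nearest point. Concretely, writing $M = H_1 \oplus H_2$ with the norm~\eqref{T-norm}, I would split every ket and every bicomplex coefficient in idempotent form. Each $\ket{\psi_l}$ decomposes as $\ket{\psi_l} = \eo\ket{\psi_{l,\mo}} + \et\ket{\psi_{l,\mt}}$, each scalar as $\alpha_l = a_l\,\eo + b_l\,\eee$ with $a_l,b_l \in \mC(\bo)$, and the orthonormality of $\{\ket{\psi_n}\}$ in $M$ means, by~\eqref{2.36}, that $\{\ket{\psi_{l,\mo}}\}$ and $\{\ket{\psi_{l,\mt}}\}$ are orthonormal in $V_1$ and $V_2$ respectively.

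First I would apply the square of the norm~\eqref{T-norm} to the vector $\ket{\psi} - \sum_{l=0}^{n}\alpha_l\ket{\psi_l}$. Since $\eo\eee = 0$ and multiplication respects the idempotent splitting, the sum decouples completely:
\begin{align*}
2\big{|}\big{|}\ket{\psi}-\sum_{l=0}^{n}\alpha_l\ket{\psi_l}\big{|}\big{|}^{2}
&= \big{|}\ket{\psi_\mo}-\sum_{l=0}^{n}a_l\ket{\psi_{l,\mo}}\big{|}^{2}_{1}
 + \big{|}\ket{\psi_\mt}-\sum_{l=0}^{n}b_l\ket{\psi_{l,\mt}}\big{|}^{2}_{2}.
\end{align*}
Each summand on the right is now a genuine complex-Hilbert-space quantity in $V_1$ and $V_2$. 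By the classical best-approximation theorem in each $V_k$, the distance $\big{|}\ket{\psi_\mo}-\sum a_l\ket{\psi_{l,\mo}}\big{|}_{1}$ is minimized precisely when $a_l = \scalarmath{\ket{\psi_{l,\mo}}}{\ket{\psi_\mo}}_\hh$, and similarly $b_l = \scalarmath{\ket{\psi_{l,\mt}}}{\ket{\psi_\mt}}_\hhh$ minimizes the $V_2$-term. Since the two terms are independent and nonnegative, their sum is minimized by this joint choice.

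The remaining step is to verify that these optimal complex coefficients reassemble into the bicomplex coefficient $\braket{{\psi_l}}{{\psi}}$ asserted in the statement. By~\eqref{2.36} applied to $\braket{{\psi_l}}{{\psi}}$, we have exactly $\braket{{\psi_l}}{{\psi}} = \eo\,\scalarmath{\ket{\psi_{l,\mo}}}{\ket{\psi_\mo}}_\hh + \eee\,\scalarmath{\ket{\psi_{l,\mt}}}{\ket{\psi_\mt}}_\hhh$, so the projections $P_1$ and $P_2$ of $\braket{{\psi_l}}{{\psi}}$ are precisely the optimal $a_l$ and $b_l$. Hence the bicomplex coefficient $\alpha_l = \braket{{\psi_l}}{{\psi}}$ simultaneously realizes the minimizer in both components, and the claimed inequality follows by comparing the decoupled norm for arbitrary $\alpha_l$ against the minimizing choice.

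I do not expect any serious obstacle here: the only point requiring care is that the $\mR$-valued $\M(2)$-norm~\eqref{T-norm} is \emph{not} itself bicomplex-linear, so one cannot argue purely at the bicomplex level. The genuinely load-bearing move is the clean decoupling into the two complex components, which hinges on $\eo\eee=0$ and on the multiplicativity of the projections $P_k$; once that decoupling is in hand, the result is just the classical theorem applied twice. The main thing to state carefully is that minimizing a sum of two independent nonnegative terms is achieved by minimizing each separately, which legitimizes treating the $V_1$ and $V_2$ problems in isolation.
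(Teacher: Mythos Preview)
Your proposal is correct and follows essentially the same approach as the paper: decompose all kets and scalars idempotently, apply the classical best-approximation theorem in each complex Hilbert space $H_k$, and recombine via the norm formula~\eqref{T-norm}. The paper is slightly terser---it writes the componentwise inequality directly rather than phrasing it as minimizing a sum of independent nonnegative terms---but the logical content is identical.
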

\begin{proof}
By definition of the bicomplex inner product, the set $\{\ket{\psi_{n\mk}}\}$ is also an arbitrary orthonormal sequence in the Hilbert space $H_k$ for $k=1,2$. Therefore, using the classical Best Approximation Theorem (see \cite{Hansen}, P.61) on the Hilbert spaces $H_1$ and $H_2$, we obtain for $k=1,2$:
\begin{equation*}
\big{|}\ket{\psi_\mk}-\sum_{l=0}^{n}P_k(\alpha_l) \ket{\psi_{l\mk}}\big{|}_{k}\geq
\big{|}\ket{\psi_\mk}-\sum_{l=0}^{n}\braket{{\psi_{l\mk}}}{{\psi_\mk}}_{\widehat{k}} \ket{\psi_{l\mk}}\big{|}_{k}.
\end{equation*}
Hence, by definition of the $\M(2)$-norm, we have that
\begin{eqnarray*}
\big{|}\big{|}\ket{\psi}-\sum_{l=0}^{n}\alpha_l \ket{\psi_l}\big{|}\big{|} &=& \frac{1}{\sqrt{2}} \sqrt{\sum_{k=1}^{2} \big{|}\ket{\psi_\mk}-\sum_{l=0}^{n}P_k(\alpha_l) \ket{\psi_{l\mk}}\big{|}_{k}^{2}}\\
&\geq& \frac{1}{\sqrt{2}} \sqrt{\sum_{k=1}^{2} \big{|}\ket{\psi_\mk}-\sum_{l=0}^{n}
\braket{{\psi_{l\mk}}}{{\psi_\mk}}_{\widehat{k}}  \ket{\psi_{l\mk}}\big{|}_{k}^{2}}\\
&=& \big{|}\big{|}\ket{\psi}-\sum_{l=0}^{n}\braket{{\psi_l}}{{\psi}} \ket{\psi_l}\big{|}\big{|}.
\end{eqnarray*}
\end{proof}

An important consequence of the Best Approximation Theorem is that an orthonormal basis for a dense subspace
of a bicomplex Hilbert space is actually an orthonormal basis in the full bicomplex Hilbert space. This is very useful
result for the construction of specific orthonormal basis in separable Hilbert spaces. The precise result is as follows.

\begin{theorem}
Let $N$ be a dense subspace of the bicomplex Hilbert space $M$, and assume that $\{ \ket{m_l} \}$ is an orthonormal
Schauder $\M(2)$-basis for $N$. Then $\{ \ket{m_l} \}$ is also an orthonormal Schauder $\M(2)$-basis for $M$.
\label{DENSE}
\end{theorem}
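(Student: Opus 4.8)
The plan is to reduce the claim to two statements about an arbitrary $\ket{\psi}\in M$: that its Fourier series $\sum_{l}\braket{m_l}{\psi}\ket{m_l}$ converges to $\ket{\psi}$ in the induced $\M(2)$-norm, and that this is the only way to expand $\ket{\psi}$ along $\{\ket{m_l}\}$. Orthonormality needs no new argument, since $N\subseteq M$ carries the very same scalar product, so $\braket{m_k}{m_l}=\delta_{kl}$ continues to hold in $M$. In keeping with the approach announced in the introduction, the whole convergence argument will run through the Best Approximation Theorem (Theorem \ref{BEST}) rather than through a Parseval-type identity: its role is precisely to let me trade arbitrary expansion coefficients for Fourier coefficients without ever worsening the approximation.

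First I would fix $\ket{\psi}\in M$ and $\varepsilon>0$, and use the density of $N$ to choose $\ket{\phi}\in N$ with $\big{|}\big{|}\ket{\psi}-\ket{\phi}\big{|}\big{|}<\varepsilon/2$. Because $\{\ket{m_l}\}$ is a Schauder $\M(2)$-basis of $N$, the vector $\ket{\phi}$ has a norm-convergent expansion $\ket{\phi}=\sum_{l=0}^{\infty}\beta_l\ket{m_l}$ with bicomplex coefficients $\beta_l$, so there is an index $n_0$ with $\big{|}\big{|}\ket{\phi}-\sum_{l=0}^{n}\beta_l\ket{m_l}\big{|}\big{|}<\varepsilon/2$ for every $n\geq n_0$. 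The decisive step is then to apply Theorem \ref{BEST} with the coefficients $\alpha_l=\beta_l$, for each such $n$, and chain it with the triangle inequality for the $\M(2)$-norm:
\begin{align*}
\big{|}\big{|}\ket{\psi}-\sum_{l=0}^{n}\braket{m_l}{\psi}\ket{m_l}\big{|}\big{|}
&\leq \big{|}\big{|}\ket{\psi}-\sum_{l=0}^{n}\beta_l\ket{m_l}\big{|}\big{|}\\
&\leq \big{|}\big{|}\ket{\psi}-\ket{\phi}\big{|}\big{|}+\big{|}\big{|}\ket{\phi}-\sum_{l=0}^{n}\beta_l\ket{m_l}\big{|}\big{|}<\varepsilon .
\end{align*}
Since this bound is valid for all $n\geq n_0$ simultaneously, the Fourier partial sums of $\ket{\psi}$ converge to $\ket{\psi}$, which shows that $\{\ket{m_l}\}$ expands every element of $M$ with its Fourier coefficients.

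It remains to settle uniqueness. Suppose $\ket{\psi}=\sum_{l=0}^{\infty}\alpha_l\ket{m_l}$ for some bicomplex coefficients $\alpha_l$. Pairing on the left with a fixed $\ket{m_k}$ and passing the scalar product through the limit by Proposition \ref{SCALIM}, together with $\M(2)$-linearity in the second slot and orthonormality, I obtain $\alpha_k=\braket{m_k}{\psi}$. Hence the expansion coefficients are forced to be the Fourier coefficients, uniqueness follows, and $\{\ket{m_l}\}$ is an orthonormal Schauder $\M(2)$-basis of $M$.

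I expect the only genuine obstacle to be making the convergence step uniform in $n$: Theorem \ref{BEST} must be invoked for every $n\geq n_0$ at once, which is legitimate precisely because the Schauder expansion of $\ket{\phi}$ in $N$ is a convergent series and because Best Approximation accepts the coefficients $\beta_l$ even though they bear no a priori relation to $\ket{\psi}$. Everything else---orthonormality, the triangle inequality, and the uniqueness computation---is routine once the continuity of the bicomplex scalar product (Proposition \ref{SCALIM}) is in hand.
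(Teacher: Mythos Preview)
Your proposal is correct and follows essentially the same route as the paper: density of $N$ to pick an approximant, the Schauder expansion of that approximant, the Best Approximation Theorem to replace the approximant's coefficients by the Fourier coefficients of the target vector, and Proposition~\ref{SCALIM} for uniqueness. The only cosmetic differences are that the paper swaps the roles of $\ket{\psi}$ and $\ket{\phi}$ and identifies the approximant's coefficients as its own Fourier coefficients before invoking Best Approximation, whereas you (slightly more economically) feed the raw Schauder coefficients $\beta_l$ directly into Theorem~\ref{BEST} and postpone the coefficient-identification argument to the uniqueness step.
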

\begin{proof}
Since $\{ \ket{m_l} \}$ is a Schauder $\M(2)$-basis for $N$, any $\ket{\psi}\in N$ admits a unique expansion as
an infinite series $\ket{\psi}=\sum_{l=1}^{\infty} \alpha_l \ket{m_l}$. In fact,
$$\ket{\psi}=\sum_{l=1}^{\infty} \braket{{m_l}}{{\psi}} \ket{m_l}.$$
This follows by Proposition \ref{SCALIM} and the short computation
$$\braket{{m_l}}{{\psi}}=\braket{{m_l}}{\lim_{n\rightarrow \infty}\sum_{k=1}^{n}\alpha_k {m_k}}
=\lim_{n\rightarrow \infty}\braket{{m_l}}{\sum_{k=1}^{n}\alpha_k {m_k}}
=\alpha_l,$$
valid for all $l\in\mathbb{N}$. Now, to complete the proof, let us prove that any ket $\ket{\phi}\in M$ admits the same expansion form:
\begin{equation}
\ket{\phi}=\sum_{l=1}^{\infty} \braket{{m_l}}{{\phi}} \ket{m_l}.
\label{Unique}
\end{equation}
To prove this assertion, let an arbitrary $\epsilon>0$ be given. Since, $N$ is dense in $M$,
we can choose $\ket{\psi}\in N$, such that $\big{|}\big{|}\ket{\phi}-\ket{\psi} \big{|}\big{|}<\frac{\epsilon}{2}$.
Now write $\ket{\psi}=\sum_{l=1}^{\infty} \braket{{m_l}}{{\psi}} \ket{m_l}$, and choose $n_0\in\mathbb{N}$
such that
$$n\geq n_0 \Rightarrow \big{|}\big{|} \ket{\psi}-\sum_{l=1}^{n} \braket{{m_l}}{{\psi}} \ket{m_l} \big{|}\big{|}<\frac{\epsilon}{2}.$$
By the Best Approximation Theorem, we then get for all $n\geq n_0$,
\begin{eqnarray*}
\big{|}\big{|} \ket{\phi}-\sum_{l=1}^{n} \braket{{m_l}}{{\phi}} \ket{m_l} \big{|}\big{|} &\leq&
\big{|}\big{|} \ket{\phi}-\sum_{l=1}^{n} \braket{{m_l}}{{\psi}} \ket{m_l} \big{|}\big{|} \\
&\leq& \big{|}\big{|}\ket{\phi}-\ket{\psi} \big{|}\big{|}+\big{|}\big{|}\ket{\psi}- \sum_{l=1}^{n} \braket{{m_l}}{{\psi}} \ket{m_l}\big{|}\big{|}\\
&\leq& \frac{\epsilon}{2}+\frac{\epsilon}{2}.
\end{eqnarray*}
Hence,
\begin{equation*}
\ket{\phi}=\lim_{n\rightarrow\infty} \sum_{l=1}^{n} \braket{{m_l}}{{\phi}} \ket{m_l}=\sum_{l=1}^{\infty} \braket{{m_l}}{{\phi}} \ket{m_l}.
\end{equation*}
This prove that $\{ \ket{m_l} \}$ is an orthonormal Schauder $\M(2)$-basis for $M$.
\end{proof}

The next result shows that all separable bicomplex Hilbert spaces are separable by a basis.

\begin{lemma}
Every separable bicomplex Hilbert space $M$ has an orthonormal Schauder $\M(2)$-basis.
\label{SHS03}
\end{lemma}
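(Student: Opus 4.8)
The plan is to reduce the statement to classical Hilbert-space theory on the two component spaces. Recall from Section~2 that $M=V_1\oplus V_2$, where each $V_k=\e{k}M$ is a complex Hilbert space over $\C(\ii)$ equipped with the standard scalar product $\scalarmath{\cdot}{\cdot}_{\widehat{k}}$, and that completeness of $M$ forces $V_1$ and $V_2$ to be complete. I would first produce a countable orthonormal basis in each $V_k$ by purely classical means, and then glue the two bases index-by-index into a single bicomplex family $\{\ket{m_l}\}$, checking that \eqref{2.36} converts componentwise orthonormality into genuine bicomplex orthonormality and that the two componentwise Fourier expansions assemble into the desired bicomplex expansion.

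The first real step is to transfer separability from $M$ to each $V_k$. Let $W\subset M$ be a countable dense set. Since $\ket{\psi_\mk}=\e{k}\ket{\psi}$ and, by \eqref{T-norm}, $\|\e{k}\ket{\psi}\|=\tfrac{1}{\sqrt2}\,\big|\ket{\psi_\mk}\big|_k\le\|\ket{\psi}\|$, multiplication by the idempotent (i.e.\ the projection onto $V_k$) is a contraction; moreover on $V_k$ the induced norm $\|\cdot\|$ is just $\tfrac{1}{\sqrt2}|\cdot|_k$, so the two topologies coincide. Hence $\{\e{k}w : w\in W\}$ is countable, and given $\ket{v_k}\in V_k$ and $\varepsilon>0$ we may pick $w\in W$ with $\|\ket{v_k}-w\|<\varepsilon$ and use $\e{k}\ket{v_k}=\ket{v_k}$ to obtain $\big|\ket{v_k}-\e{k}w\big|_k<\sqrt2\,\varepsilon$. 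Thus $V_k$ is a separable complex Hilbert space and so admits a countable orthonormal basis; write $\{\ket{f_l}\}\subset V_1$ and $\{\ket{g_l}\}\subset V_2$.

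Finally I would set $\ket{m_l}:=\ket{f_l}+\ket{g_l}$, so that $\ket{m_{l\mo}}=\ket{f_l}$ and $\ket{m_{l\mt}}=\ket{g_l}$. By \eqref{2.36}, $\braket{m_l}{m_{l'}}=\ee\,\scalarmath{\ket{f_l}}{\ket{f_{l'}}}_{\widehat{1}}+\eee\,\scalarmath{\ket{g_l}}{\ket{g_{l'}}}_{\widehat{2}}=\delta_{l l'}(\ee+\eee)=\delta_{l l'}$, so the family is bicomplex-orthonormal. To see it is a Schauder $\M(2)$-basis, take $\ket{\psi}\in M$, expand each $\ket{\psi_\mk}$ in its classical basis, and recombine using $\ee^2=\ee$, $\eee^2=\eee$, $\ee\eee=0$; the resulting coefficient of $\ket{m_l}$ is exactly $\alpha_l=\braket{m_l}{\psi}=\ee\,\scalarmath{\ket{f_l}}{\ket{\psi_\mo}}_{\widehat{1}}+\eee\,\scalarmath{\ket{g_l}}{\ket{\psi_\mt}}_{\widehat{2}}$, and uniqueness of the expansion follows from uniqueness in each $V_k$ together with the uniqueness of the idempotent decomposition \eqref{2.12}.

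The step I expect to be delicate is this last gluing: a bicomplex orthonormal basis must satisfy $\braket{m_l}{m_l}=1$ rather than equal a mere zero divisor, so the two classical bases cannot be listed independently but must be paired over one common index set. This is where the standing convention that $M$ is an infinite-dimensional bicomplex Hilbert space is used: it guarantees that both $V_1$ and $V_2$ are infinite-dimensional separable complex Hilbert spaces, hence that each carries an orthonormal basis indexed by $\mathbb{N}$, which is what permits the index-by-index pairing above. Everything else is a routine transcription of the classical facts through the idempotent decomposition.
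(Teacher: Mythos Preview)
Your argument is correct and takes a genuinely different route from the paper. The paper works \emph{intrinsically} in $M$: it takes the countable dense set $W$, passes to its bicomplex linear span $U$, sifts $W$ down to a bicomplex linearly independent spanning family $\{\ket{\phi_n}\}$ for $U$, observes that bicomplex linear independence keeps each $\braket{\phi_n}{\phi_n}$ out of the null cone so that the bicomplex Gram--Schmidt procedure of \cite{GMR2} applies, and then invokes Theorem~\ref{DENSE} (and hence ultimately the Best Approximation Theorem~\ref{BEST}) to upgrade the resulting orthonormal basis of the dense sub-$\M(2)$-module to one for all of $M$. You instead push separability down to $V_1$ and $V_2$ via the continuity of the idempotent projections, apply the classical complex Hilbert-space result on each component, and glue the two bases index-by-index through~\eqref{2.36} and~\eqref{T-norm}. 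Your route is more elementary in that it bypasses bicomplex Gram--Schmidt and the chain Theorem~\ref{BEST}\,$\to$\,Theorem~\ref{DENSE} altogether; the price is exactly the pairing issue you isolate, which forces both $V_k$ to carry orthonormal bases of the same countably infinite cardinality. The paper's intrinsic Gram--Schmidt sidesteps that question, since each orthonormalized vector automatically satisfies $\braket{\psi_l}{\psi_l}=1$ rather than a zero divisor, so no index-matching between the two components ever has to be arranged.
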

\begin{proof}
By the definition of separability, $M$ contains a countable, dense subset $W$ of kets in $M$.
Consider the linear subspace $U$ in $M$ consisting of all finite bicomplex linear combinations of kets
in $W$ - the \textit{bicomplex linear span} of $W$. Clearly, $U$ is a dense sub-$\M(2)$-module in $M$.
By the construction of $U$ we can eliminate kets from the countable set $W$ one after the other
to get a (bicomplex) linearly independent set $\{ \ket{\phi_n} \}$ (finite, or countable) of kets in $U$ that
spans $U$. However, a sub-$\M(2)$-module $U$ in $M$ of finite dimension is a complete space, thus a closed set in
$M$, and then $U=\bar{U}=M$ a contradiction with our hypothesis. Therefore, the set $\{ \ket{\phi_n} \}$ is
a countable (bicomplex) linearly independent set of kets in $U$. Now, since no $\ket{\phi_n}$
(and thus no $\braket{{\phi_n}}{{\phi_n}}$) can belongs to the null cone, the
classical Gram-Schmidt process can be applied (see \cite{GMR2}, P.14). Hence, we can turn
the sequence $\{ \ket{\phi_n} \}$ into an orthonormal sequence $\{ \ket{\psi_n} \}$ with
the property that for all $n\in\mN$,
$$\mbox{span} \{ \ket{\phi_n} \}_{l=1}^{n}=\mbox{span} \{ \ket{\psi_l} \}_{l=1}^{n}$$
Since $\{ \ket{\psi_l} \}$ is orthonormal, we can use $\{ \ket{\psi_l} \}$  as a Schauder
$\M(2)$-basis to generate a linear subspace $N$ in $M$ (for the unicity, see the proof of Theorem \ref{DENSE}).
Then $N$ is a dense sub-$\M(2)$-module in $M$, since $U$ is a dense sub-$\M(2)$-module in $N$.
The latter follows since any ket $\ket{\psi}\in N$ can be expanded into a series
$\ket{\psi}=\sum_{l=1}^{\infty}\alpha_l  \ket{\psi_l}$, showing that
$\ket{\psi}=\lim_{n\rightarrow \infty}\sum_{l=1}^{n}\alpha_l  \ket{\psi_l}$,
and hence that $\ket{\psi}$ is the limit of a sequence of kets in $U$.

By construction, $\{ \ket{\psi_l} \}$ is an orthonormal Schauder
$\M(2)$-basis for $N$ and hence by Theorem \ref{DENSE} also for $M$.
\end{proof}

\begin{theorem}
If $M$ is a separable bicomplex Hilbert space, then $H_k$ ($k=1,2$) is an infinite dimensional separable complex
Hilbert space.
\label{SHS}
\end{theorem}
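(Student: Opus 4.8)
My plan is to exhibit, for each $k=1,2$, an explicit countably infinite orthonormal Schauder basis of $H_k$; this settles separability and infinite dimensionality simultaneously. Recall first that $H_k=V_k$ is already a complex Hilbert space: it is a vector space over $\mC(\bo)$ equipped with the standard scalar product $\sc{\cdot}{\cdot}_{\widehat{k}}$, and it is complete because $M$ is complete (the preliminaries record that $M$ is complete if and only if $V_1$ and $V_2$ are). So the real content is the two remaining properties. Since $M$ is separable, Lemma \ref{SHS03} gives an orthonormal Schauder $\M(2)$-basis $\{\ket{\psi_l}\}$ of $M$; being a Schauder basis it is a sequence, and since $M$ is infinite dimensional (our standing convention) it cannot terminate, so the index set is all of $\mN$. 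The candidate basis for $H_k$ is the family of projections $\ket{\psi_{l\mk}}:=\e{k}\ket{\psi_l}\in H_k$.

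First I would check that $\{\ket{\psi_{l\mk}}\}$ is orthonormal in $H_k$. Writing the bicomplex orthonormality $\braket{\psi_l}{\psi_m}=\delta_{lm}=\delta_{lm}\ee+\delta_{lm}\eee$ in the idempotent form \eqref{2.36} and using the uniqueness of the idempotent decomposition, one reads off $\braket{\psi_{l\mk}}{\psi_{m\mk}}_{\widehat{k}}=\delta_{lm}$ for each $k$. In particular every $\ket{\psi_{l\mk}}$ is a unit vector and distinct ones are orthogonal, so $\{\ket{\psi_{l\mk}}\}_{l\in\mN}$ is an infinite orthonormal set in $H_k$; this already forces $H_k$ to be infinite dimensional, for both $k=1$ and $k=2$.

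Next I would prove completeness of this orthonormal system, i.e.\ that it is a Schauder basis of $H_k$. Let $\ket{v}\in H_k$, so $\e{k}\ket{v}=\ket{v}$. Viewing $\ket{v}$ as an element of $M$ and applying the expansion $\ket{v}=\sum_{l=1}^{\infty}\braket{\psi_l}{v}\ket{\psi_l}$ from the proof of Theorem \ref{DENSE}, I would project with $\e{k}$ and use $\braket{\psi_l}{v}\,\e{k}\ket{\psi_l}=\P{k}{\braket{\psi_l}{v}}\ket{\psi_{l\mk}}$ together with $\P{k}{\braket{\psi_l}{v}}=\braket{\psi_{l\mk}}{v}_{\widehat{k}}$ (again from \eqref{2.36}) to obtain $\ket{v}=\sum_{l=1}^{\infty}\braket{\psi_{l\mk}}{v}_{\widehat{k}}\ket{\psi_{l\mk}}$. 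Thus $\{\ket{\psi_{l\mk}}\}$ is a countable orthonormal Schauder basis of $H_k$, and a complex Hilbert space with a countable Schauder basis is separable (the finite linear combinations with coefficients in $\mQ+\mQ\bo$ form a countable dense subset).

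The step that needs care is the term-by-term projection of the infinite series. For this I would first record that the map $\ket{\phi}\mapsto\e{k}\ket{\phi}$ is continuous on $M$: from \eqref{T-norm} one gets $\|\e{k}\ket{\phi}\|=\frac{1}{\sqrt{2}}\,|\ket{\phi_\mk}|_{k}\leq\|\ket{\phi}\|$, so the projection is $1$-Lipschitz and hence commutes with the limit defining the series. This same estimate yields a shortcut for separability that avoids the basis altogether: if $W$ is a countable dense subset of $M$, then $\e{k}W$ is a countable subset of $H_k$, and for $\ket{v}\in H_k$ and $\ket{w}\in W$ one has $\|\ket{v}-\e{k}\ket{w}\|=\|\e{k}(\ket{v}-\ket{w})\|\leq\|\ket{v}-\ket{w}\|$, so $\e{k}W$ is dense in $H_k$. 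I expect this continuity-of-projection point, rather than any algebraic identity, to be the only genuine obstacle; the orthonormality and coefficient identifications are immediate consequences of \eqref{2.36} and the uniqueness of the idempotent decomposition.
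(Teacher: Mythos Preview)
Your proposal is correct and follows essentially the same route as the paper: obtain an orthonormal Schauder $\M(2)$-basis of $M$ via Lemma \ref{SHS03} and then project it with $\e{k}$ to get an orthonormal Schauder basis of $H_k$, from which separability and infinite dimensionality follow. The paper compresses your orthonormality and completeness verifications into a single ``it is easy to see'' and then quotes \cite[Theorem 3.3.6]{Hansen} for the implication ``Schauder basis $\Rightarrow$ separable,'' whereas you supply those details explicitly (including the continuity of $\ket{\phi}\mapsto\e{k}\ket{\phi}$ needed to project the series termwise); your version is thus a fleshed-out form of the same argument.
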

\begin{proof}
From Lemma \ref{SHS03}, $M=H_1\oplus H_2$ has an orthonormal Schauder $\M(2)$-basis  $\{ \ket{\psi_l} \}$. It is easy to see that
$\{ \ket{\psi_{l\mk}} \}$ is also an orthonormal Schauder basis for $H_k$ ($k=1,2$). Hence, $H_k$ ($k=1,2$) is
separable by a basis. Now, from Theorem 3.3.6. in \cite{Hansen}, $H_k$ ($k=1,2$) is an infinite dimensional separable complex
Hilbert space.
\end{proof}

\begin{definition}
Denote by $l^2_2$, the space of all (real, complex or bicomplex) sequences $\{ w_l \}$ such that
$$
\sum_{l=1}^{\infty}|w_l|^{2}<\infty.
$$
\end{definition}

The bicomplex $l^2_2$ space is clearly an $\M(2)$-module. The norm of the associated vector space $({l^2_2})'$ over $\mathbb{C}(\ik{1})$ is
defined by
\begin{equation*}
||\{ w_l \}||_2=\Big( \sum_{l=1}^{\infty}|w_l|^{2} \Big)^{\frac{1}{2}}.
\end{equation*}

\begin{theorem}
$l^2_2$ is a bicompex Hilbert space.
\end{theorem}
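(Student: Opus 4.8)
The plan is to equip $l^2_2$ with a bicomplex scalar product whose induced $\M(2)$-norm is exactly $||\cdot||_2$, and then to obtain completeness from the idempotent splitting $l^2_2=V_1\oplus V_2$ by reducing everything to the classical fact that the complex sequence space $l^2$ is complete. The virtue of this route is that, by the criterion recorded just before Definition \ref{Hilbert}, an $\M(2)$-module is complete with respect to its metric if and only if the two component spaces $V_1$ and $V_2$ are complete, so once the algebra is in place no genuinely bicomplex analysis remains.

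First I would define, for $\{w_l\},\{w'_l\}\in l^2_2$,
$$(\{w_l\},\{w'_l\}):=\sum_{l=1}^{\infty} w_l^{\dagger_3}\,w'_l,$$
and check that the series converges in $\M(2)$. Writing each term in the idempotent basis as $w_l=z_{l\hh}\ee+z_{l\hhh}\eee$ and $w'_l=z'_{l\hh}\ee+z'_{l\hhh}\eee$, and using $\ee^{\dagger_3}=\ee$, $\eee^{\dagger_3}=\eee$, $\ee\eee=0$ from \eqref{2.11}, one gets
$$w_l^{\dagger_3}\,w'_l=\bar z_{l\hh}\,z'_{l\hh}\,\ee+\bar z_{l\hhh}\,z'_{l\hhh}\,\eee.$$
Since $|w_l|^2=\tfrac12(|z_{l\hh}|^2+|z_{l\hhh}|^2)$, membership in $l^2_2$ is equivalent to $\{z_{l\hh}\},\{z_{l\hhh}\}\in l^2$ over $\mC(\bo)$; the two component series $\sum \bar z_{l\hh}z'_{l\hh}$ and $\sum \bar z_{l\hhh}z'_{l\hhh}$ then converge absolutely by the classical Cauchy--Schwarz inequality, so the bicomplex series converges componentwise. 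Properties $1$--$3$ of a bicomplex scalar product are inherited term by term from the $\M(2)$ algebra, while property $4$ and hyperbolic positivity follow from $w_l^{\dagger_3}w_l=|z_{l\hh}|^2\ee+|z_{l\hhh}|^2\eee\in\D_+$.

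Next I would pass to the decomposition $l^2_2=V_1\oplus V_2$ with $V_k=\e{k}\,l^2_2$. The assignment $\ee\{w_l\}\mapsto\{z_{l\hh}\}$ identifies $V_1$, equipped with $\scalarmath{\cdot}{\cdot}_\hh=P_1((\cdot,\cdot))$, with the classical complex Hilbert space $l^2$ over $\mC(\bo)$, and likewise $V_2\cong l^2$ through the $\hhh$-component. A short computation with \eqref{T-norm} shows that the induced $\M(2)$-norm is $||\{w_l\}||=\tfrac{1}{\sqrt2}\sqrt{\sum|z_{l\hh}|^2+\sum|z_{l\hhh}|^2}=\big(\sum|w_l|^2\big)^{1/2}=||\{w_l\}||_2$, so the scalar product above genuinely induces the stated norm. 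Because the classical $l^2$ is complete, both $V_1$ and $V_2$ are complete; invoking the completeness criterion stated before Definition \ref{Hilbert} (see \cite{GMR}), we conclude that $l^2_2$ is complete with respect to its metric. Together with the verified bicomplex scalar product and hyperbolic positivity, this establishes that $l^2_2$ is a bicomplex Hilbert space in the sense of Definition \ref{Hilbert}.

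The algebraic verification of the scalar-product axioms is routine, and I expect the only real content to be the completeness step. There the difficulty is entirely offloaded onto the classical completeness of $l^2$ through the $V_1\oplus V_2$ splitting, so the point that must be handled carefully is the clean identification of each $V_k$ with the classical $l^2$ via the idempotent bookkeeping; once this is set up so that the equivalence ``complete $\Leftrightarrow V_1,V_2$ complete'' applies, no further bicomplex obstacle arises.
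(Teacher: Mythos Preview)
Your proposal is correct and follows essentially the same approach as the paper: both arguments rest on the idempotent splitting $l^2_2=(\ee\,l^2)\oplus(\et\,l^2)$, the verification that the induced $\M(2)$-norm coincides with $||\cdot||_2$ via $|w_l|^2=\tfrac12(|z_{l\hh}|^2+|z_{l\hhh}|^2)$, and the reduction of completeness to that of the classical $l^2$. The only cosmetic difference is that you construct the bicomplex scalar product explicitly as $\sum w_l^{\dagger_3}w'_l$ and verify its axioms, whereas the paper takes the scalar product as implicitly given by \eqref{2.36} from the standard inner products on the two copies of $l^2$.
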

\begin{proof}
Let us prove that $({l^2_2})'=(\eo l^2) \oplus (\et l^2)$. This comes automatically from the fact that
any bicomplex sequence $\{ w_l \}$ can be decomposed as
the following sum of two sequences in $\mathbb{C}(\ik{1})$:
$$\{ w_l \}=\eo\{ z_{1l}-z_{2l}\ik{1} \}+\et\{z_{1l}+z_{2l}\ik{1} \}.$$
To complete the proof, we need to verify that the norm $||\cdot||_2$
coincides with the induced $\M(2)$-norm of the bicomplex Hilbert space $(\eo l^2) \oplus (\et l^2)$.
Let $||\cdot||$ be the induced $\M(2)$-norm of the bicomplex Hilbert space $(\eo l^2) \oplus (\et l^2)$. Thus
$$\big{|}\big{|}\{ w_l \}\big{|}\big{|}=
\frac{1}{\sqrt{2}} \sqrt{ \big{|}\{ z_{1l}-z_{2l}\ik{1} \}\big{|}^{2}_{1}
+ \big{|}\{z_{1l}+z_{2l}\ik{1} \}\big{|}^{2}_{2}}$$
where $\big{|}\cdot\big{|}_{1}=\big{|}\cdot\big{|}_{2}$ is the classical norm on $l^2$.
Hence,
\begin{align*}
\big{|}\big{|}\{ w_l \}\big{|}\big{|}
&= \frac{1}{\sqrt{2}} \sqrt{ \big{|}\{ z_{1l}-z_{2l}\ik{1} \}\big{|}^{2}_{1}
+ \big{|}\{z_{1l}+z_{2l}\ik{1} \}\big{|}^{2}_{1}}\\
&= \frac{1}{\sqrt{2}} \sqrt{ \sum_{l=1}^{\infty}|z_{1l}-z_{2l}\ik{1}|^{2}
+\sum_{l=1}^{\infty}|z_{1l}+z_{2l}\ik{1}|^{2}}\\
&=\sqrt{\sum_{l=1}^{\infty}\frac{[|z_{1l}-z_{2l}\ik{1}|^{2}
+|z_{1l}+z_{2l}\ik{1}|^{2}]}{2}}\\
&=||\{ w_l \}||_2.
\end{align*}

\end{proof}

We are now ready for the proof of the main result on the structure of infinite dimensional, separable
bicomplex Hilbert space. We show that the space of square summable bicomplex sequences $l^2_2$  Define the projection $T_\mk:M\longrightarrow V_k$ as
\begin{equation*}
T_\mk \ket{\phi}
:=  \e{k}T(\ket{\phi}), \;
\forall \ket{\phi} \in M, \; k=1,2. \notag
\end{equation*}

\noindent With this definition we have the following Lemma.

\begin{lemma}
Let $M_1, M_2$ be two $\M(2)$-modules and $T:M_1\rightarrow M_2$ be a bicomplex
linear function. Then
$\forall\ket{\phi}\in M_1$ we have
\begin{equation*}
T_{\mk}(\ket{\phi})=T(\ket{\phi_{\mk}}), \ \
(k=1,2).
\end{equation*}
\label{ISO}
\end{lemma}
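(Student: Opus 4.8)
The plan is to read the identity off directly from the bicomplex linearity of $T$, since the statement asserts nothing more than that $T$ commutes with multiplication by the idempotent $\e{k}$. Recall the two relevant definitions: the projection is $T_\mk\ket{\phi} := \e{k}\,T(\ket{\phi})$, and the $k$th component ket is $\ket{\phi_\mk} := \e{k}\ket{\phi}$. Hence the desired equality $T_\mk(\ket{\phi}) = T(\ket{\phi_\mk})$ is precisely the claim that $\e{k}\,T(\ket{\phi}) = T(\e{k}\ket{\phi})$.

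First I would observe that $\e{k}$ is a (hyperbolic, hence bicomplex) number, so $\e{k}\ket{\phi}$ is an honest scalar multiple of $\ket{\phi}$ in the $\M(2)$-module $M_1$. Then, invoking the homogeneity part of bicomplex linearity with the scalar $w = \e{k} \in \M(2)$, one gets $T(\e{k}\ket{\phi}) = \e{k}\,T(\ket{\phi})$. Chaining the two definitions with this gives
\begin{equation*}
T(\ket{\phi_\mk}) = T(\e{k}\ket{\phi}) = \e{k}\,T(\ket{\phi}) = T_\mk(\ket{\phi}),
\end{equation*}
valid for $k=1,2$ and all $\ket{\phi}\in M_1$, which is exactly the assertion.

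The computation is immediate and there is no substantive obstacle; the one point worth flagging is that \emph{bicomplex linear} must be understood as homogeneity over the full ring $\M(2)$, not merely over $\C(\ii)$ or $\R$. Since $\e{k}$ lies in $\D \subset \M(2)$, this stronger homogeneity is precisely what the argument consumes, and it is guaranteed by the definition of a bicomplex linear map. As a consistency check it is also worth noting that both sides of the identity land in the correct space $V_k = \e{k}M_2$, in agreement with the stated codomain of $T_\mk$.
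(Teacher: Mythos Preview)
Your proof is correct and takes essentially the same approach as the paper: both unwind the definitions $T_\mk(\ket{\phi})=\e{k}T(\ket{\phi})$ and $\ket{\phi_\mk}=\e{k}\ket{\phi}$ and invoke $\M(2)$-homogeneity of $T$ with the scalar $\e{k}$. The paper inserts an intermediate step writing $\ket{\phi}=\ket{\phi_\mo}+\ket{\phi_\mt}$ before pulling $\e{k}$ inside, but your direct application of homogeneity is equivalent and slightly cleaner.
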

\begin{proof}
\begin{align*}
T_{\mk}(\ket{\phi})
&= \ek (T(\ket{\phi}))\\
&= \ek (T(\ket{\phi_{\mo}}+\ket{\phi_{\mt}}))\\
&= T((\ket{\phi_{\mk}})).
\end{align*}
\end{proof}

\begin{theorem}[Riesz-Fischer] Every separable bicomplex Hilbert space $M$ is
isometrically isomorphic to the bicomplex Hilbert space $l^2_2$.
\end{theorem}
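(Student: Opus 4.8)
The plan is to reduce the statement to the classical Riesz-Fischer theorem applied separately to the two complex Hilbert spaces $H_1,H_2$ sitting inside $M=H_1\oplus H_2$, exploiting the parallel idempotent splitting $(l^2_2)'=(\eo l^2)\oplus(\et l^2)$ established above. First I would use Lemma \ref{SHS03} to fix an orthonormal Schauder $\M(2)$-basis $\{\ket{\psi_l}\}$ of $M$; as in the proof of Theorem \ref{SHS}, its idempotent projections $\{\ket{\psi_{l\mk}}\}$ form an orthonormal Schauder basis of $H_k$ for $k=1,2$. Then I define the coefficient map
$$T:M\longrightarrow l^2_2,\qquad T(\ket{\phi}):=\{\braket{\psi_l}{\phi}\},$$
whose bicomplex linearity is immediate from properties $1$ and $2$ of the bicomplex scalar product.

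Next I would analyze $T$ through its idempotent components. By Lemma \ref{ISO}, $T_\mk(\ket{\phi})=T(\ket{\phi_\mk})$, and by \eqref{2.36} the $\widehat{k}$-projection of the $l$-th coefficient is $P_k(\braket{\psi_l}{\phi})=\braket{\psi_{l\mk}}{\phi_\mk}_{\widehat{k}}$. Thus on the factor $V_k\cong H_k$ the map $T$ reduces to the classical coefficient map $\ket{\phi_\mk}\mapsto\{\braket{\psi_{l\mk}}{\phi_\mk}_{\widehat{k}}\}$. Since $H_k$ is a separable infinite-dimensional complex Hilbert space (Theorem \ref{SHS}), the classical Riesz-Fischer theorem guarantees that this map is a well-defined isometric isomorphism of $H_k$ onto $l^2$. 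In particular $T(\ket{\phi})$ indeed lies in $l^2_2$, because $|\braket{\psi_l}{\phi}|^2=\tfrac12\big(|\braket{\psi_{l\mo}}{\phi_\mo}_{\widehat{1}}|^2+|\braket{\psi_{l\mt}}{\phi_\mt}_{\widehat{2}}|^2\big)$ is summable.

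The isometry then drops out by feeding the two component isometries into the norm formula \eqref{T-norm}:
$$\|T(\ket{\phi})\|_2^2=\sum_l|\braket{\psi_l}{\phi}|^2=\tfrac12\sum_{k=1}^{2}\sum_l\big|\braket{\psi_{l\mk}}{\phi_\mk}_{\widehat{k}}\big|^2=\tfrac12\sum_{k=1}^{2}\big|\ket{\phi_\mk}\big|_k^2=\|\ket{\phi}\|^2.$$
It is precisely here that the isometry of the classical map stands in for Parseval's identity, as advertised in the Introduction. Surjectivity is obtained componentwise: given $\{w_l\}\in l^2_2$, each $\{P_k(w_l)\}$ lies in $l^2$, so surjectivity of the classical map yields $\ket{\phi_\mk}\in H_k$ with coefficient sequence $\{P_k(w_l)\}$; setting $\ket{\phi}:=\ket{\phi_\mo}+\ket{\phi_\mt}$ and recombining the idempotent pieces gives $T(\ket{\phi})=\{w_l\}$. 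Injectivity follows from the isometry just proved.

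The step I expect to be the main obstacle is the bookkeeping of the bicomplex structure across the idempotent decomposition: one must check that gluing the two $\C(\ii)$-linear isometries on $V_1$ and $V_2$ yields a genuinely \emph{bicomplex}-linear bijection (not merely a pair of complex maps), and that the module decomposition $M=V_1\oplus V_2$ is transported correctly onto $(l^2_2)'=(\eo l^2)\oplus(\et l^2)$ so that \eqref{T-norm} applies verbatim. Everything else is a faithful translation of the classical theorem across the two idempotent sectors.
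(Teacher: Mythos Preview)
Your proposal is correct and follows essentially the same route as the paper: fix an orthonormal Schauder $\M(2)$-basis via Lemma~\ref{SHS03}, define $T$ as the coefficient map, and verify the isometry by splitting along the idempotent decomposition (Lemma~\ref{ISO}) and invoking the classical Riesz--Fischer theorem on each $H_k$ (Theorem~\ref{SHS}). The only cosmetic difference is that the paper appeals to Theorem~3.11 of \cite{GMR} for well-definedness and surjectivity of $T$, whereas you extract both from the componentwise classical theorem; the arguments are otherwise the same.
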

\begin{proof}
From Lemma \ref{SHS03}, since $M=H_1\oplus H_2$ is a separable bicomplex Hilbert
space, it has an orthonormal Schauder $\M(2)$-basis: $$\{\ket{m_1}, \dots ,\ket{m_l}, \dots\}.$$
Then each
$\ket{\psi}\in M$ admits a unique decomposition as
$$\ket{\psi}=\sum_{l=1}^{\infty}w_l\ket{m_l}, \ \ w_l\in\M(2).$$

Since the infinite series above converges, by Theorem 3.11 in
\cite{GMR}, the series $\sum_{l=1}^{\infty}\left|w_l\right|^{2}$
converges in $\mR$ and thus $\{w_l\}\in l^2_2$. Now, define a map
$T:M\rightarrow l^2_2$ as
$$T(\ket{\phi})=\{w_l\}_{l=1}^{\infty} \ \ \forall \ket{\phi}\in
M.$$
$T$ is a well defined map: Let $\ket{\phi}, \ \ket{\psi} \in M$ be
such that $\ket{\phi}=\ket{\psi}$. Hence,
$\sum_{l=1}^{\infty}w_l\ket{m_l}
=\sum_{l=1}^{\infty}{w_l}{\prime}\ket{m_l}$ and then by the uniqueness
of the representation we find that $w_l ={w_l}^{\prime}$ for each
$l\in \mN$, which further implies that
$T(\ket{\phi})=T(\ket{\psi})$. Next, we show that $T$ is \textbf{bicomplex} linear. Let $\ket{\phi}, \ \ket{\psi}
\in M$ and $\alpha, \ \beta \in \mathbb {T}$. Then,
\begin{eqnarray*}
T(\alpha \ket{\phi}+\beta \ket{\psi}) &=&
T(\alpha\sum_{l=1}^{\infty}w_l\ket{m_l} +\beta
\sum_{l=1}^{\infty}{w_l}{\prime}\ket{m_l})\\
&=& T(\sum_{l=1}^{\infty}(\alpha w_l)\ket{m_l} +
\sum_{l=1}^{\infty}(\beta {w_l}{\prime})\ket{m_l})\\
&=& T(\sum_{l=1}^{\infty}(\alpha w_l + \beta
{w_l}{\prime})\ket{m_l})\\
&=& \{\alpha w_l +\beta {w_l}^{\prime} \}\\
&=& \alpha \{ w_l\}+\beta \{{w_l}^{\prime} \}\\
&=& \alpha T(\ket{\phi})+\beta T(\ket{\psi}).
\end{eqnarray*}
Now, since $\{ \ket{w_l} \}$ is an orthonormal basis in $M$, by Equation \eqref{Unique} in Theorem \ref{DENSE}, every ket $\ket{\phi}\in M$ admits
the unique expansion
$$\ket{\phi}=\sum_{l=1}^{\infty} \braket{{m_l}}{{\phi}} \ket{m_l}.$$
Hence, $T$ is \textbf{injective}, since $T(\ket{\phi})=\{ \braket{{m_l}}{{\phi}} \}=0$ implies $\braket{{m_l}}{{\phi}}=0$
for all $l\in\mathbb{N}$, and thus $\ket{\phi}=0$. Moreover, $T$ is \textbf{surjective}, since for any element $\{ \alpha_l \}\in l^2_2$,
the series $\ket{\xi}=\sum_{l=1}^{\infty}\alpha_l\ket{m_l}$ is convergent (Theorem 3.11 in \cite{GMR}).
Finally we shall show that $T$ is an isometry.
By Lemma \ref{ISO}, we have that
\begin{eqnarray}
\big{\|}T(\ket{\phi})\big{\|}
&=& \big{|}\sqrt{\scalarmath{T(\ket{\phi})}{T(\ket{\phi})}}\big{|}\notag\\
&=& \big{|}\sqrt {\eo
\scalarmath{T(\ket{\phi_{\mo}})}{T(\ket{\phi_{\mo}})}_{\widehat{1}} +
\et \scalarmath{T(\ket{\phi_{\mt}})}{T(\ket{\phi_{\mt}})}_{\widehat{2}}}\big{|}\notag\\
\label{iso9001}
\end{eqnarray}

By Theorem \ref{SHS}, the classical Riesz-Fischer Theorem can be applied to
$H_k$ where $T:H_k\rightarrow \ek l^2$ for $k=1,2$. Then we find that
\begin{eqnarray*}
\scalarmath{T(\ket{\phi_{\mk}})}{T(\ket{\phi_{\mk}})}_{\widehat{k}} &=&
{\big{|}T(\ket{\phi_{\mk}})\big{|}^{2}_k} \\
&=& { \big{|}\ket{\phi_{\mk}}\big{|}^{2}_k} \\
&=& \braket{{\phi_{\mk}}}{{\phi_{\mk}}}_{\widehat{k}}
\end{eqnarray*}
for $k=1,2$, where $\big{|}\cdot\big{|}_{1}=\big{|}\cdot\big{|}_{2}$ is the classical norm on $l^2$.
Thus, from Equation \eqref{iso9001}, we get that
\begin{equation*}
\big{\|}T(\ket{\phi})\big{\|}=\big{\|}\ket{\phi}\big{\|}.
\end{equation*}
This proves that $T$ is an isometry. Hence $M$ is isometrically
isomorphic to the bicomplex Hilbert space $l^2_2$.
\end{proof}

\section*{Acknowledgment}
DR is grateful to the Natural Sciences and
Engineering Research Council of Canada for financial
support.


\bibliographystyle{amsplain}

\end{document}